\newcounter{stepctr}
{\end{list}}
\newtheorem{thm}{Theorem}[section]
\newtheorem{prop}[thm]{Proposition}
\newtheorem{cor}[thm]{Corollary}
\theoremstyle{definition}
\newtheorem{dfn}[thm]{Definition}
\newtheorem{ex}[thm]{Example}
\newtheorem{rema}[thm]{Remark}
\newtheorem{lem}[thm]{Lemma}
\newtheorem{prob*}{Open problem}
\newcommand{\demo}{\begin{proof}}
\newcommand{\NN}{\mathcal{N}}
\newcommand{\R}{\ensuremath{\mathcal{R}}}
\newcommand{\N}{\mathbb{N}}
\def\ll^2{{\mathcal L}(\ell^2(\N))}
\def\f^0x{{\mathcal F^0}(X) }
\title
{\bf The Berkani's property and  a note on some recent results }
\author{  Zakariae Aznay,  Hassan  Zariouh}
\date{}
\begin{document}

\maketitle \thispagestyle{empty}

\begin{abstract}\noindent\baselineskip=10pt
 In this paper, we continue the  study of  property $(UW_{\Pi})$ introduced in \cite{berkani2},  in connection with other Weyl type theorems. Moreover,  we  give counterexamples to show that some   recent results related to this property, which are  announced and proved by P. Aiena and M. Kachad in \cite{aiena1} are false. Furthermore, we specify  the mistakes committed in each of them and  we give the  correct versions. We also give a global  note on the paper \cite{jayanthi}.

\end{abstract}

 \baselineskip=15pt
 \footnotetext{\small \noindent  2010 AMS subject
classification: Primary 47A53, 47A10, 47A11 \\
\noindent Keywords:   Property $(UW_{\Pi}),$   Property $(Z_{\Pi_{a}})$} \baselineskip=15pt
\section{Introduction}
 We continue the study of  properties introduced  in \cite{berkani1, berkani2}, in connection with other properties and Weyl type theorems.
 Moreover, we prove by counterexample (see Remark I), that   we do not expected  that an operator satisfying property $(UW_{\Pi})$ satisfies property $(Z_{\Pi_{a}}),$  contrarily to what has been proved in \cite[Thorem 2.5]{aiena1}. Furthermore, we give the correct version of \cite[Thorem 2.5]{aiena1} by proving  in Theorem \ref{thm1}, that if $T\in L(X)$ is an operator satisfying property $(UW_{\Pi})$ and $\Pi_{a}(T)\cap\sigma_{uw}(T)=\emptyset,$ then it satisfies property $(Z_{\Pi_{a}}).$ We also give the correct version of \cite[Thorem 2.6]{aiena1} (see Remark II and Theorem \ref{thm2}). As deduction, we give  analogous  result to Theorem \ref{thm2} for the properties $(UW_{E})$ and $(Z_{E_{a}}).$ And, we give in Remark IV a very crucial observation on the results of   the paper \cite{jayanthi}.
\section{Terminology and preliminaries}

Let $X$ denote an infinite dimensional complex Banach space, and
we denote by $L(X)$ the algebra of all bounded linear operators on $X.$
For $T\in L(X),$ we  denote by $\alpha(T)$ the dimension of the
kernel $\NN(T)$ and by $\beta(T)$ the codimension of the range $\R(T).$
By $\sigma (T)$ and $\sigma_a(T),$ we denote the spectrum,
 the approximate spectrum  of $T,$
respectively.
For an operator $T\in
L(X),$ the \textit{ascent} $p(T)$ and the \textit{descent} $q(T)$ of $T$
are defined by $p(T) = \inf\{n\in\N : \NN(T^n) = \NN(T^{n+1})\}$ and
$q(T)= \inf\{n\in\N :\R(T^n) = \R(T^{n+1})\},$ respectively; the
infimum over the empty set is taken $\infty.$  In order to simplify, and to give a global view to the reader, we use the same symbols and notations used in \cite{aiena1}. For more details on the several  classes and spectra originating from  Fredholm theory and B-Fredholm theory, we refer the reader to \cite{aiena1, berkani2, zariouh}.

In the following  list, we summarize  the same   notations and symbols used in \cite{aiena1}, which will be
needed in this paper.

\smallskip
 \noindent The symbol $\bigsqcup$
stands for disjoint union,\\
\noindent $\mbox{iso}\,A$: isolated points of a  subset $A\subset \mathbb{C},$\\
 \noindent $A^C$: the complementary of a subset $A\subset \mathbb{C}.$\\

 \begin{tabular}{l|l}
   $\sigma_{b}(T)$:  Browder spectrum of $T$ &  $\Delta^g(T):=\sigma(T)\setminus\sigma_{bw}(T)$\\
   $\sigma_{ub}(T)$:  upper semi-Browder spectrum of $T$ & $\Delta_{a}^g(T):=\sigma_{a}(T)\setminus\sigma_{ubw}(T)$\\
   $\sigma_{w}(T)$: Weyl spectrum of $T$ & $\Delta(T):=\sigma(T)\setminus\sigma_{w}(T)$\\
   $\sigma_{bw}(T)$: B-Weyl spectrum of $T$ & $\Pi^0(T)$: poles of $T$ of finite rank\\
   $\sigma_{uw}(T)$: upper semi-Weyl spectrum of $T$ & $\Pi_a(T)$: left  poles of $T$\\
   $\sigma_{ubw}(T)$: upper semi-B-Weyl spectrum of $T$ & $\Pi_a^0(T)$: left  poles of $T$ of finite rank\\
   $\sigma_{ld}(T)$: left Drazin spectrum of $T$ & $\Pi(T)$: poles of $T$\\
   $ \sigma_{d}(T)$:  Drazin spectrum of $T,$& $E_a(T):=\mbox{iso}\,\sigma_a(T)\cap\sigma_{p}(T)$\\
    $\sigma_{p}(T)$:  eigenvalues of $T$ & $E(T):=\mbox{iso}\,\sigma(T)\cap\sigma_{p}(T)$\\
$\sigma_{p}^0(T)$: eigenvalues of $T$ of finite multiplicity & $E^0(T):=\mbox{iso}\,\sigma(T)\cap\sigma_{p}^0(T)$\\
$\Delta_{a}(T):=\sigma_{a}(T)\setminus\sigma_{uw}(T)$ & $E_a^0(T):=\mbox{iso}\,\sigma_a(T)\cap\sigma_{p}^0(T)$
\\
\end{tabular}\\

The following property has relevant role in local
spectral theory: a bounded linear
 operator $T\in L(X)$ is said to have the {\it single-valued
 extension property} (SVEP for short) at $\lambda\in\mathbb{C}$ if
  for every open neighborhood $U_\lambda$ of $\lambda,$ the  function $f\equiv 0$ is the only
 analytic solution of the equation
 $(T-\mu I)f(\mu)=0\quad\forall\mu\in U_\lambda.$ We denote by
  ${\mathcal S}(T)=\{\lambda\in\mathbb{C}: T\mbox{  does not have the SVEP at } \lambda\}$
     and we say that  $T$ has  SVEP
  if
$ {\mathcal S}(T)=\emptyset.$ We say that $T$ has the SVEP on $A\subset\mathbb{C},$ if $T$ has the SVEP
at every $\lambda\in A.$ (For more details about this property, we refer the reader to \cite{Aiena1}). Thus it follows easily  that $T\in L(X)$ has the  SVEP at every point of the boundary $\partial\sigma(T)$ of the spectrum $\sigma(T).$ In particular, $T$ has the SVEP at every  isolated  point in
$\sigma(T).$ We also have from  \cite[Theorem 2.65]{Aiena1},  \[p(T-\lambda_0 I)<\infty \Longrightarrow
\mbox{ T has the SVEP at } \lambda_0, \,\,\,(A)\] and dually
\[q(T-\lambda_0 I)<\infty \Longrightarrow \mbox{ $T^*$ has the SVEP
at } \lambda_0,\,\,\,(B),\]
 Furthermore, if $T-\lambda_0 I$ is
semi-B-Fredholm then the implications above are equivalences.

 Note that it well known that  if $T^*$ has the SVEP on $(\sigma_{ubw}(T))^C,$ then $\sigma_{a}(T)=\sigma(T),$
$\sigma_{w}(T)=\sigma_{uw}(T),$ $\sigma_{bw}(T)=\sigma_{ubw}(T),$ $E(T)=E_{a}(T)$ and $\Pi(T)=\Pi_{a}(T).$ And this gives a short and simple proof of
\cite[Theorem 2.15]{aiena2}.
\section{Correct versions and addendums}

\begin{dfn} \cite{berkani1, berkani2} \label{dfn1}Let $T\in L(X).$ $T$ is said to satisfy\\
a) property $(UW_{\Pi})$  if $\Delta_{a}(T)=\Pi(T).$\\
b) property $(UW_{E})$  if $\Delta_{a}(T)=E(T).$\\
c) property $(UW_{E_a})$  if $\Delta_{a}(T)=E_a(T).$
\end{dfn}

 \noindent We begin this section by the following lemma, which will be   useful in the sequel.

\begin{lem}\label{lem1}let $T\in L(X).$\\
(i) If $T$ satisfies property $(UW_{E_a}),$ then

$E_a(T)=E_a^0(T)=\Pi_a^0(T)=\Pi_a(T)\mbox{ and }   E(T)=E^0(T)=\Pi^0(T)=\Pi(T).$\\
(ii) If  $T$ satisfies property $(UW_{E}),$ then $E(T)=E^0(T)=\Pi_a^0(T)=  \Pi^0(T)=\Pi(T).$\\
(iii) If $T$ satisfies  property $(UW_{\Pi}),$ then $\Pi^0(T)=\Pi(T)=\Pi_a^0(T).$
\end{lem}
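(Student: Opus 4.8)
The plan is to isolate three hypothesis-free observations about $\Delta_a(T)$ and then read off each of the three implications as a short deduction. The first observation is that $\Delta_a(T)\subseteq\sigma_p^0(T)$: if $\lambda\in\Delta_a(T)=\sigma_a(T)\setminus\sigma_{uw}(T)$ then $T-\lambda I$ is upper semi-Weyl, so $\alpha(T-\lambda I)<\infty$ and $\R(T-\lambda I)$ is closed, while $\lambda\in\sigma_a(T)$ forces $\alpha(T-\lambda I)>0$; hence $0<\alpha(T-\lambda I)<\infty$. The second observation is the identity $\Pi_a^0(T)=\Delta_a(T)\cap\mbox{iso}\,\sigma_a(T)$, valid for every $T$: one inclusion is immediate, since a left pole of finite rank is upper semi-Browder (hence upper semi-Weyl) and lies in $\mbox{iso}\,\sigma_a(T)$; conversely, if $\lambda\in\Delta_a(T)\cap\mbox{iso}\,\sigma_a(T)$ then isolation in $\sigma_a(T)$ gives SVEP for $T$ at $\lambda$, and since $T-\lambda I$ is semi-B-Fredholm the equivalence in $(A)$ upgrades this to $p(T-\lambda I)<\infty$, turning upper semi-Weyl into upper semi-Browder and so producing a left pole of finite rank. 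Finally I record the elementary inclusions $\mbox{iso}\,\sigma(T)\subseteq\mbox{iso}\,\sigma_a(T)$ (because $\partial\sigma(T)\subseteq\sigma_a(T)$ and $\sigma_a(T)\subseteq\sigma(T)$), together with $\Pi(T)\subseteq E(T)$, $\Pi_a(T)\subseteq E_a(T)$ and $E(T)\subseteq E_a(T)$.

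With these in hand each part follows. For (iii), property $(UW_{\Pi})$ reads $\Pi(T)=\Delta_a(T)$; the first observation gives $\Pi(T)\subseteq\sigma_p^0(T)$, so every pole has finite rank and $\Pi(T)=\Pi^0(T)$, while $\Pi(T)\subseteq\mbox{iso}\,\sigma(T)\subseteq\mbox{iso}\,\sigma_a(T)$ combined with the second observation yields $\Pi_a^0(T)=\Delta_a(T)\cap\mbox{iso}\,\sigma_a(T)=\Delta_a(T)=\Pi(T)$. Parts (i) and (ii) run along the same lines: the hypothesis identifies $\Delta_a(T)$ with a set ($E_a(T)$, resp. $E(T)$) that is already contained in $\mbox{iso}\,\sigma_a(T)$, so the second observation at once gives $\Delta_a(T)=\Pi_a^0(T)$, and the first observation collapses the finite-multiplicity/finite-rank distinctions; for instance in (i) one gets $E_a(T)=E_a^0(T)$ from $E_a(T)=\Delta_a(T)\subseteq\sigma_p^0(T)$, and $\Pi_a(T)=\Pi_a^0(T)$ because left poles lie in $E_a(T)\subseteq\sigma_p^0(T)$.

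The one genuinely extra ingredient, which I expect to be the crux, is passing from the approximate-spectrum chain to the full-spectrum chain $E(T)=E^0(T)=\Pi^0(T)=\Pi(T)$ in part (i) (and the analogous passage in part (ii)). Here I take $\lambda\in E(T)$; from the first chain $\lambda\in E(T)\subseteq E_a(T)=\Pi_a(T)$, so $T-\lambda I$ is left Drazin invertible, in particular semi-B-Fredholm with $p(T-\lambda I)<\infty$. Since $\lambda\in\mbox{iso}\,\sigma(T)$, the operator $T^*$ has SVEP at $\lambda$, and the equivalence in $(B)$ (available because $T-\lambda I$ is semi-B-Fredholm) then forces $q(T-\lambda I)<\infty$. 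Finite ascent and descent make $\lambda$ a pole, so $E(T)\subseteq\Pi(T)$; the reverse inclusion is automatic, giving $E(T)=\Pi(T)$, after which the finite-rank equalities follow as before. The delicate point throughout is to keep straight which SVEP (of $T$ versus $T^*$) is furnished by isolation in $\sigma_a(T)$ versus $\sigma(T)$, and to invoke the semi-B-Fredholm hypothesis each time the ascent/descent equivalences $(A)$ and $(B)$ are applied.
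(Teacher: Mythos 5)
Your proof is correct and follows essentially the same route as the paper: both arguments rest on the identity $\Pi_a^0(T)=\mathrm{iso}\,\sigma_a(T)\cap(\sigma_{uw}(T))^C$ (your second observation) and on upgrading the SVEP of $T$ and $T^*$ at isolated spectral points to finite ascent/descent via the semi-B-Fredholm equivalences $(A)$ and $(B)$. The only difference is presentational: you give a self-contained argument for part (i), where the paper simply cites results of Berkani--Kachad.
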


\begin{proof} (i) See, \cite[Remark 3.4, Theorem 3.5, Remark 2.5]{berkani1}.\\
(ii) Since $T$ satisfies property $(UW_{E}),$ then
$ \lambda\in E(T)\Longleftrightarrow \lambda\in\mbox{iso}\,\sigma_a(T)\cap(\sigma_{uw}(T))^C\Longleftrightarrow  \lambda\in \Pi_a^0(T).$ So
$E(T)=E^0(T)=\Pi_a^0(T)$ and $\Pi^0(T)=\Pi(T).$ We show $E^0(T)=\Pi^0(T)$ and  let $\lambda \in E^0(T)$ be arbitrary. Then $T$ and $T^*$ have the SVEP at $\lambda.$ As  $\lambda \in \mbox{iso}\,\sigma(T)\cap(\sigma_{uw}(T))^C.$ Hence $\lambda \in \Pi^0(T)$ and consequently, $E^0(T)=\Pi^0(T).$
As conclusion, we have $E(T)=E^0(T)=\Pi_a^0(T)=  \Pi^0(T)=\Pi(T).$\\
(iii) Goes similarly with (ii).
\end{proof}

From Lemma \ref{lem1}, we obtain immediately the following corollary.

\begin{cor}\label{cor1} Let $T\in L(X).$ Then $T$ satisfies property $(UW_{E})$ if and only if $T$ satisfies property $(UW_{\Pi})$ and $E(T)=\Pi(T).$
\end{cor}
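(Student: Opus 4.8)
The plan is to deduce this directly from Lemma \ref{lem1}, since both implications reduce to chaining together the defining equalities of the two properties once one knows that $E(T)=\Pi(T)$ holds automatically under property $(UW_E)$. In particular, the real work has already been done in Lemma \ref{lem1}(ii), and the corollary is essentially a repackaging of that fact.

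For the forward implication, I would assume $T$ satisfies $(UW_E)$, so that $\Delta_a(T)=E(T)$ by Definition \ref{dfn1}(b). Lemma \ref{lem1}(ii) then yields $E(T)=\Pi(T)$ for free, which is exactly the second conjunct on the right-hand side. Substituting, $\Delta_a(T)=E(T)=\Pi(T)$, and reading off $\Delta_a(T)=\Pi(T)$ gives property $(UW_{\Pi})$ by Definition \ref{dfn1}(a). Hence $T$ satisfies $(UW_{\Pi})$ and $E(T)=\Pi(T)$, as required.

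For the converse, I would assume $T$ satisfies $(UW_{\Pi})$ and $E(T)=\Pi(T)$. The first hypothesis gives $\Delta_a(T)=\Pi(T)$, and combining this with $E(T)=\Pi(T)$ produces the chain $\Delta_a(T)=\Pi(T)=E(T)$; hence $\Delta_a(T)=E(T)$, which is precisely property $(UW_E)$ by Definition \ref{dfn1}(b).

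I do not expect a genuine obstacle here: the entire content is carried by Lemma \ref{lem1}(ii), which supplies the implication ``$(UW_E)\Rightarrow E(T)=\Pi(T)$'' and thereby guarantees that the extra hypothesis $E(T)=\Pi(T)$ appearing on the right-hand side comes for free in the forward direction. The only point to be careful about is to cite Lemma \ref{lem1}(ii) directly rather than re-deriving $E(T)=\Pi(T)$ by hand, since the latter would reintroduce the SVEP argument used in the proof of that lemma and obscure how short the corollary actually is.
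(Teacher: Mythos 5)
Your proof is correct and matches the paper's intent exactly: the paper states that the corollary follows ``immediately'' from Lemma \ref{lem1}, and your argument is precisely the chain of substitutions $\Delta_a(T)=E(T)=\Pi(T)$ (using Lemma \ref{lem1}(ii) for the forward direction) that makes this immediacy explicit. No gaps.
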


By Corollary \ref{cor1}, if $T$ satisfies property $(UW_{E}),$ then it satisfies property $(UW_{\Pi}).$ But the converse is generally not true, as we can see in the following example.

\begin{ex} Hereafter, the Hilbert space $ l^2(\mathbb{N})$ is  denoted by $\l^2.$ We consider the operator $T$ defined on $l^2$ by $T(x_1,x_2,x_3,\ldots)=(\frac{x_2}{2},\frac{x_3}{3}, \frac{x_4}{4},\ldots).$ It easily seen that  $\sigma_a(T)=\sigma_{uw}(T)=\{0\}=E(T).$ Moreover, $\Pi(T)=\emptyset,$ since $p(T)=\infty.$ So $T$ satisfies property $(UW_{\Pi}),$ but it does not satisfy property $(UW_{E}).$ Note here that  $\Pi(T)\neq E(T).$
\end{ex}

\begin{rema}~~\label{rema1} \begin{enumerate}
  \item[1.] The properties $(UW_{E_a})$ and $(UW_{\Pi})$ are independent. 
  We consider the operator $T$ defined on the Banach space  $l^2\oplus l^2$ by $T=R\oplus P,$ where $R$ is the right shift operator defined on $l^2$ by $ R(x_1,x_2,x_3,\ldots)=(0,x_1,x_2,x_3,\ldots)$ and $P$ is the operator defined on $l^2$ by $ P(x_1,x_2,x_3,\ldots)=(0,x_2,x_3,x_4,\ldots).$ Then  $\sigma_a(T)=C(0, 1)\cup\{0\},$ $\sigma_{uw}(T)=C(0, 1);$ where $C(0, 1)$ is the unit circle
of $ \mathbb{C},$ $E_a(T)=\{0\}$ and $\Pi(T)=E(T)=\emptyset.$ Thus $T$ satisfies property  $(UW_{E_a}),$ but it does not satisfy property $(UW_{\Pi})$ and hence it does not satisfy property $(UW_{E})$ too. On the other hand, the operator $T$ defined on $l^2\oplus l^2$ by $T=R\oplus 0$ satisfies property $(UW_{\Pi}),$ but it does not satisfy property  $(UW_{E_a}).$ Indeed $\sigma_a(T)=\sigma_{uw}(T)=C(0, 1)\cup\{0\},$ $\Pi(T)=E(T)=\emptyset$ and $E_a(T)=\{0\}.$
\item[2.] The same examples defined  above in this remark, show that the properties  $(UW_{E_a})$ and $(UW_{E})$ are  independent (see also \cite{berkani2}).
\end{enumerate} \end{rema}

 Now we  prove in Remark I, Remark II and Remark III bellow,  by counterexamples  that   some results announced and proved by P. Aiena, M. Kachad in {\cite{aiena1} are false. Moreover, we  specify  the mistakes committed in each of them and  we give there  correct versions.

We recall that a bounded linear operator $T\in L(X)$ is said to satisfy  property $(Z_{\Pi_{a}})$  if  $\Delta(T)= \Pi_{a}(T),$ see \cite{zariouh}. \\
  {\bf \underline{Remark I}:} It is proved in \cite[Theorem 2.5]{aiena1} that if $T\in L(X)$ satisfies property  $(UW_{\Pi}),$ then it satisfies property $(Z_{\Pi_{a}}).$ But this result is false. Indeed, if we  consider  the operator $T=R\oplus 0$ defined above in Remark \ref{rema1}, then  $\sigma(T)=\sigma_{w}(T)=D(0, 1);$ where $D(0, 1)$ is the  closed unit disc in $ \mathbb{C}.$ Furthermore, it is easily seen that $\sigma_{ubw}(T)=\sigma_{ld}(T)=C(0, 1)$ and so $\Pi_a(T)=\{0\}.$  Hence $T$ satisfies property   $(UW_{\Pi}),$ since $\Delta_{a}(T)=\emptyset =\Pi(T),$ but it does not satisfy property  $(Z_{\Pi_{a}}),$ since $\Delta(T)=\emptyset \neq \Pi_a(T).$

  The mistake made in the   proof of \cite[Theorem 2.5]{aiena1} is as follows: the equality $$\sigma(T)=(\sigma_{w}(T)\setminus\sigma_{uw}(T))\bigsqcup\Pi_a(T)\bigsqcup\sigma_{ubw}(T)$$ proved in line  8 of the proof of   \cite[Theorem 2.5]{aiena1} gave the following incorrect  decision $(I_1)$ (see line 9 and 10 of the proof of   \cite[Theorem 2.5]{aiena1}): \\Since $\sigma_{ubw}(T)\subset\sigma_{uw}(T)$ then $$ \sigma(T)\subset(\sigma_{w}(T)\setminus\sigma_{uw}(T))\bigsqcup\Pi_a(T)\bigsqcup\sigma_{uw}(T)=\sigma_{w}(T)\bigsqcup\Pi_a(T).\,\,\,\,\,\, (I_1)$$

  And the inclusion $(I_1)$ gave the incorrect equality $(I_2)$ bellow (see line 11 of  the proof of  \cite[Theorem 2.5]{aiena1}) $$\sigma(T)=\sigma_{w}(T)\bigsqcup\Pi_a(T).\,\,\,\,\,\, (I_2)$$

  But here we don't have always the following  disjoint union $(\sigma_{w}(T)\setminus\sigma_{uw}(T))\bigsqcup\Pi_a(T)\bigsqcup\sigma_{uw}(T)$ [even if $T$ satisfies property $(UW_{\Pi})$], since
$\Pi_a(T)\cap\sigma_{uw}(T)$ is not necessarily empty. Indeed, It already mentioned that the operator $T=R\oplus 0$ defined above satisfies property $(UW_{\Pi}).$ But since  $\sigma_{uw}(T)=C(0, 1)\cup\{0\}$ and $\Pi_a(T)=\{0\},$ then $\sigma_{uw}(T)\cap\Pi_a(T)=\{0\}\neq\emptyset.$ Moreover, this example shows that $\sigma_{w}(T)\cap\Pi_a(T)=\{0\}\neq\emptyset.$ Thus we cannot consider the union $\sigma_{w}(T)\cup\Pi_a(T)$ as a disjoint union considered   in the second member of the inclusion  $(I_1).$ Consequently, $\sigma_{w}(T)\cap\Pi_a(T)\neq\emptyset$  contrarily to what has been proved in the equality $(I_2)$ above. Note that here we have $\sigma(T)\setminus\sigma_{w}(T)=\emptyset\neq \Pi_a(T)=\{0\}.$

We recall that if we have two sets $A$ and $B$ of $\mathbb{C}$ such that there union is disjoint and $B$ is a subset of $C,$ then we cannot expected  generally that the union $A\cup C$ will be  disjoint. And this was exactly the origin of the error made in the inclusion $(I_1).$\\

Among other results, we prove in the next proposition that the properties $(UW_{\Pi})$ and $(Z_{\Pi_{a}})$ are independent.

\begin{prop} \label{prop1} The properties $(UW_{\Pi})$ and $(Z_{\Pi_{a}})$ are independent.
\end{prop}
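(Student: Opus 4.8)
The plan is to prove independence by exhibiting two operators: one, call it $T_{1}$, that satisfies $(UW_{\Pi})$ but not $(Z_{\Pi_{a}})$, and another, $T_{2}$, that satisfies $(Z_{\Pi_{a}})$ but not $(UW_{\Pi})$. For the first non-implication I would simply reuse the operator $T_{1}=R\oplus 0$ already analysed in Remark I, where it was computed that $\sigma(T_{1})=\sigma_{w}(T_{1})=D(0,1)$, $\sigma_{a}(T_{1})=\sigma_{uw}(T_{1})=C(0,1)\cup\{0\}$, $\Pi(T_{1})=\emptyset$ and $\Pi_{a}(T_{1})=\{0\}$. Indeed $\Delta_{a}(T_{1})=\emptyset=\Pi(T_{1})$, so $(UW_{\Pi})$ holds, while $\Delta(T_{1})=\emptyset\neq\{0\}=\Pi_{a}(T_{1})$, so $(Z_{\Pi_{a}})$ fails; no new computation is required beyond recalling these figures.

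The substance lies in the second non-implication, and the following structural observation guides the construction. If $\lambda\in\Delta(T)\cap\Pi_{a}(T)$ then $T-\lambda$ is Weyl (Fredholm of index $0$) and has finite ascent; finite ascent together with index zero forces finite descent, so $\lambda$ is a pole and $\lambda\in\Pi(T)$. Combined with the always-valid inclusions $\Pi(T)\subseteq\Pi_{a}(T)$ and $\Delta(T)\subseteq\Delta_{a}(T)$, this means that $(Z_{\Pi_{a}})$ actually forces $\Pi(T)=\Pi_{a}(T)=\Delta(T)$. Hence, to keep $(Z_{\Pi_{a}})$ while breaking $(UW_{\Pi})$, I must manufacture a point lying in $\Delta_{a}(T)$ but not in $\Delta(T)$, that is, a point $\lambda\in\sigma_{a}(T)$ at which $T-\lambda$ is upper semi-Fredholm with \emph{strictly negative} index (so that $\lambda\notin\sigma_{uw}(T)$ but $\lambda\in\sigma_{w}(T)$), while ensuring that no genuine left poles appear.

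The idea is to arrange an index cancellation that lands on a negative value rather than on $0$. I would take $T_{2}=L\oplus R^{2}$ on $l^2\oplus l^2$, where $L$ is the backward shift $L(x_{1},x_{2},\ldots)=(x_{2},x_{3},\ldots)$. For $|\lambda|<1$ the factor $L-\lambda$ is upper semi-Fredholm with $\alpha=1$ and index $+1$, whereas $R^{2}-\lambda$ is Fredholm of index $-2$ (using $R^{2}\cong R\oplus R$, each summand Fredholm of index $-1$ on the open disc, and $\sigma_{p}(R)=\emptyset$); thus $T_{2}-\lambda$ is Fredholm with $\alpha(T_{2}-\lambda)=1$ and index $-1$. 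This gives simultaneously $\lambda\notin\sigma_{uw}(T_{2})$, $\lambda\in\sigma_{w}(T_{2})$ and $\lambda\in\sigma_{a}(T_{2})$. On the unit circle neither factor is semi-Fredholm. Collecting the pieces I expect $\sigma(T_{2})=\sigma_{w}(T_{2})=D(0,1)$, $\sigma_{a}(T_{2})=D(0,1)$ and $\sigma_{uw}(T_{2})=C(0,1)$, whence $\Delta(T_{2})=\emptyset$ but $\Delta_{a}(T_{2})=\{\lambda:|\lambda|<1\}\neq\emptyset$. Finally, since $p(L-\lambda)=\infty$ for every $|\lambda|<1$, one has $p(T_{2}-\lambda)=\infty$ there, so $T_{2}-\lambda$ is never left Drazin invertible and $\Pi_{a}(T_{2})=\emptyset$ (a fortiori $\Pi(T_{2})=\emptyset$). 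Therefore $\Delta(T_{2})=\emptyset=\Pi_{a}(T_{2})$ yields $(Z_{\Pi_{a}})$, while $\Delta_{a}(T_{2})\neq\emptyset=\Pi(T_{2})$ denies $(UW_{\Pi})$.

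The main obstacle I anticipate is the index bookkeeping over the open disc for the direct sum: one must check that both factors are genuinely Fredholm (not merely semi-Fredholm) for $|\lambda|<1$, so that the indices add to the strictly negative value $-1$, and one must confirm that the ascent contributed by the backward shift is truly infinite so that no left poles survive. I would settle the index through the unitary equivalence $R^{2}\cong R\oplus R$ and the absence of eigenvalues of $R$, and settle the ascent through the explicit description $\NN((L-\lambda)^{n})=\mathrm{span}\{e_{1},\dots,e_{n}\}$ at $\lambda=0$ together with the analogous Jordan-type computation of the generalized kernels for $0<|\lambda|<1$.
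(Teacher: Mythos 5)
Your proposal is correct and takes essentially the same route as the paper: the first half reuses the identical operator $R\oplus 0$ from Remark I, and your second operator $T_{2}=L\oplus R^{2}$ is unitarily equivalent (via $R^{2}\cong R\oplus R$) to the paper's example $R\oplus R\oplus L$, producing the same spectral data $\sigma(T)=\sigma_{a}(T)=\sigma_{w}(T)=D(0,1)$, $\Delta(T)=\Pi(T)=\Pi_{a}(T)=\emptyset$ and $\alpha(T-\lambda)=1$, $\beta(T-\lambda)=2$ on the open disc. The only difference is that you carry out the index and ascent bookkeeping in more detail than the paper, which simply records $\alpha(T)=1$, $\beta(T)=2$ to conclude $0\in\Delta_{a}(T)$.
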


\begin{proof} It already mentioned in Remark I,  that the operator $T=R\oplus 0$ satisfies  property  $(UW_{\Pi}),$ but it  does not satisfy property  $(Z_{\Pi_{a}}).$ On the other hand, if we consider  the operator $T=R\oplus R\oplus L;$ where $L$ is the left shift operator. Then $\sigma(T)=\sigma_{a}(T)=\sigma_{w}(T)=D(0, 1)$ and so $\Delta(T)=\Pi(T)=\Pi_{a}(T)=\emptyset.$ Thus $T$ satisfies  property $(Z_{\Pi_{a}}).$ Moreover, we have  $\alpha(T)=1$ and $\beta(T)=2,$ then $0\in \Delta_{a}(T).$ Hence $T$ does not satisfy  property $(UW_{\Pi}).$
\end{proof}
We give in  the following theorem the correct versions  of \cite[Theorem 2.5]{aiena1}.

\begin{thm}\label{thm1} Let $T\in L(X)$ be an operator satisfying property $(UW_{\Pi}).$ Then\\
 If $\Pi_{a}(T)\cap\sigma_{uw}(T)=\emptyset,$ then $T$ satisfies property $(Z_{\Pi_{a}}).$ In particular, property $(Z_{\Pi_{a}})$ holds for $T$ if $\sigma_{uw}(T)=\sigma_{ubw}(T).$
\end{thm}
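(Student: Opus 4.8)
The plan is to prove the chain of equalities $\Delta(T)=\Pi(T)=\Pi_a(T)$; the outer equality $\Delta(T)=\Pi_a(T)$ is exactly property $(Z_{\Pi_a})$. The organizing observation is that the hypothesis $\Pi_a(T)\cap\sigma_{uw}(T)=\emptyset$ is used only to collapse the left poles onto the poles, whereas the equality $\Delta(T)=\Pi(T)$ will turn out to be a free consequence of property $(UW_{\Pi})$ alone. So I would treat the two equalities separately and then concatenate them.

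First I would establish $\Pi_a(T)=\Pi(T)$. The inclusion $\Pi(T)\subseteq\Pi_a(T)$ is automatic, since every pole is a left pole. For the reverse, take $\lambda\in\Pi_a(T)$. Then $\lambda\in\sigma_a(T)$, and the hypothesis forces $\lambda\notin\sigma_{uw}(T)$; hence $\lambda\in\sigma_a(T)\setminus\sigma_{uw}(T)=\Delta_a(T)$, which equals $\Pi(T)$ by property $(UW_{\Pi})$. This gives $\Pi_a(T)\subseteq\Pi(T)$ and therefore equality. It is precisely here that the intersection hypothesis is indispensable: it is what rules out the pathology of $T=R\oplus 0$ in Remark I, where $\Pi_a(T)=\{0\}$ sits inside $\sigma_{uw}(T)$ and the conclusion fails.

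Next I would prove $\Delta(T)=\Pi(T)$ using only $(UW_{\Pi})$. For $\Delta(T)\subseteq\Pi(T)$, let $\lambda\in\Delta(T)=\sigma(T)\setminus\sigma_{w}(T)$, so $T-\lambda$ is Weyl (Fredholm of index zero). Since $\lambda\in\sigma(T)$, the operator $T-\lambda$ is not invertible, so index zero forces $\alpha(T-\lambda)=\beta(T-\lambda)\geq 1$; in particular $\lambda$ is an eigenvalue and $\lambda\in\sigma_a(T)$. As a Weyl operator is upper semi-Weyl, $\lambda\notin\sigma_{uw}(T)$, and we land once more in $\sigma_a(T)\setminus\sigma_{uw}(T)=\Delta_a(T)=\Pi(T)$. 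For the opposite inclusion $\Pi(T)\subseteq\Delta(T)$ I would invoke Lemma \ref{lem1}(iii), which under $(UW_{\Pi})$ gives $\Pi(T)=\Pi^0(T)$; thus each $\lambda\in\Pi(T)$ is a pole of finite rank, i.e. $T-\lambda$ is Browder, hence Fredholm of index zero, so $\lambda\in\sigma(T)\setminus\sigma_{w}(T)=\Delta(T)$. Combining with the previous paragraph yields $\Delta(T)=\Pi(T)=\Pi_a(T)$, which is property $(Z_{\Pi_a})$.

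Finally, for the ``in particular'' clause I would reduce the special assumption $\sigma_{uw}(T)=\sigma_{ubw}(T)$ to the standing one. Since left Drazin invertibility implies upper semi-B-Weyl, one has $\sigma_{ubw}(T)\subseteq\sigma_{ld}(T)$; and because $\Pi_a(T)=\sigma_a(T)\setminus\sigma_{ld}(T)$ is disjoint from $\sigma_{ld}(T)$, it is automatically disjoint from $\sigma_{ubw}(T)$. Hence if $\sigma_{uw}(T)=\sigma_{ubw}(T)$ then $\Pi_a(T)\cap\sigma_{uw}(T)=\Pi_a(T)\cap\sigma_{ubw}(T)=\emptyset$, so the first part applies and $(Z_{\Pi_a})$ follows. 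The only genuinely delicate point in the whole argument is the first step; once $\Pi_a(T)=\Pi(T)$ is secured, the remaining equalities are routine identifications within Fredholm theory, so I would concentrate the care on using the intersection hypothesis exactly there and nowhere else.
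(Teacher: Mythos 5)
Your proposal is correct and follows essentially the same route as the paper: both arguments rest on Lemma \ref{lem1}(iii) (i.e. $\Delta_a(T)=\Pi(T)=\Pi^0(T)$ under $(UW_{\Pi})$), use the hypothesis $\Pi_a(T)\cap\sigma_{uw}(T)=\emptyset$ only to place $\Pi_a(T)$ inside $\Delta_a(T)$, and close the loop via the standard inclusions $\Delta(T)\subset\Delta_a(T)$ and $\Pi^0(T)\subset\Delta(T)$; the ``in particular'' clause is handled identically. You merely make explicit some inclusions the paper calls clear, so no further comparison is needed.
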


\begin{proof}  It is clear that  $\Delta(T)\subset\Delta_{a}(T).$ By  hypothesis, $\Pi_{a}(T)\cap\sigma_{uw}(T)=\emptyset,$ it follows
 that $\Pi_{a}(T)=\Pi_{a}(T)\cap(\sigma_{uw}(T))^C\subset\Delta_{a}(T).$ As  $T$ satisfies property $(UW_{\Pi})$ then from Lemma \ref{lem1},   $\Delta_{a}(T)=\Pi^0(T).$  The inclusion $\Pi^0(T)\subset\Delta(T)$ is always true. Hence  $\Delta(T)=\Pi_{a}(T)$ and $T$ satisfies $(Z_{\Pi_{a}}).$

 In particular, if $\sigma_{uw}(T)=\sigma_{ubw}(T),$ then $\Pi_{a}(T)\cap\sigma_{uw}(T)=\Pi_{a}(T)\cap\sigma_{ubw}(T)=\emptyset.$ So property $(Z_{\Pi_{a}})$ holds for $T.$
\end{proof}

 Following \cite{zariouh}, we say that an operator $T\in L(X)$ satisfies property $(Z_{E_{a}})$ if $\Delta(T)=E_{a}(T).$ Similarly to Theorem \ref{thm1},  and to avoid redundancies, we give the following result without proof. Note  that the examples given in Proposition \ref{prop1} shows that the properties $(UW_{E})$ and $(Z_{E_{a}})$ are independent.

\begin{thm}\label{thm0} Let $T\in L(X).$ The  following statement holds.\\
 If $T$ satisfies property $(UW_{E})$ and $E_{a}(T)\cap\sigma_{uw}(T)=\emptyset,$ then $T$ satisfies property $(Z_{E_{a}}).$
\end{thm}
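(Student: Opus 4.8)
The plan is to follow the structure of the proof of Theorem \ref{thm1} almost verbatim, simply transporting each step from the pair $(\Pi, Z_{\Pi_a})$ to the pair $(E, Z_{E_a})$. Recall that property $(UW_E)$ means $\Delta_a(T)=E(T)$, property $(Z_{E_a})$ means $\Delta(T)=E_a(T)$, and the hypothesis is $E_a(T)\cap\sigma_{uw}(T)=\emptyset$. The goal is to establish the two inclusions $E_a(T)\subset\Delta(T)$ and $\Delta(T)\subset E_a(T)$ which together give $\Delta(T)=E_a(T)$.

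First I would dispose of the inclusion $E_a(T)\subset\Delta(T)$. The hypothesis $E_a(T)\cap\sigma_{uw}(T)=\emptyset$ says precisely that $E_a(T)=E_a(T)\cap(\sigma_{uw}(T))^C$. Since $E_a(T)\subset\mbox{iso}\,\sigma_a(T)\subset\sigma_a(T)$, intersecting with $(\sigma_{uw}(T))^C$ lands us inside $\sigma_a(T)\setminus\sigma_{uw}(T)=\Delta_a(T)$, so $E_a(T)\subset\Delta_a(T)$. This mirrors exactly the step in Theorem \ref{thm1} that showed $\Pi_a(T)\subset\Delta_a(T)$ from the corresponding disjointness hypothesis.

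Next I would use property $(UW_E)$ together with Lemma \ref{lem1}(ii). Property $(UW_E)$ gives $\Delta_a(T)=E(T)$, and Lemma \ref{lem1}(ii) identifies $E(T)=E^0(T)=\Pi_a^0(T)=\Pi^0(T)=\Pi(T)$. In particular $\Delta_a(T)=\Pi^0(T)$. The inclusion $\Pi^0(T)\subset\Delta(T)$ is always true (poles of finite rank lie in $\sigma(T)\setminus\sigma_w(T)$), so chaining these gives $E_a(T)\subset\Delta_a(T)=\Pi^0(T)\subset\Delta(T)$. This establishes one inclusion and simultaneously provides the engine for the reverse one, since it shows $\Delta(T)\supset\Delta_a(T)$, while the trivial inclusion $\Delta(T)\subset\Delta_a(T)$ holds always; hence $\Delta(T)=\Delta_a(T)=E(T)$. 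Finally, to get $\Delta(T)\subset E_a(T)$ I would invoke $E(T)\subset E_a(T)$, which holds because $\mbox{iso}\,\sigma(T)\cap\sigma_p(T)\subset\mbox{iso}\,\sigma_a(T)\cap\sigma_p(T)$ whenever the relevant isolated points of $\sigma(T)$ are also isolated points of $\sigma_a(T)$; combined with the reverse containment $E_a(T)\subset\Delta(T)=E(T)$ already derived, this forces $\Delta(T)=E(T)=E_a(T)$, which is property $(Z_{E_a})$.

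The step I expect to require the most care is the comparison between $E(T)$ and $E_a(T)$, i.e. verifying that the isolated-point and eigenvalue conditions align correctly so that $E(T)=E_a(T)$ under the standing hypotheses rather than merely one containment. The inclusion $E_a(T)\subset E(T)$ is not automatic in general, so the argument must route through $E_a(T)\subset\Delta(T)$ and $\Delta(T)=E(T)$ (forcing $E_a(T)\subset E(T)$) rather than asserting it directly; getting the logical order of these containments right, exactly as in Theorem \ref{thm1} where $\Pi_a(T)\subset\Delta_a(T)=\Pi^0(T)\subset\Delta(T)$ was used to pin down $\Delta(T)=\Pi_a(T)$, is where the bookkeeping must be handled carefully.
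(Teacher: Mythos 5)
Your argument is correct and is exactly the analogue of the proof of Theorem \ref{thm1} that the paper intends (the paper omits the proof, stating it is similar to Theorem \ref{thm1}): the chain $E_a(T)\subset\Delta_a(T)=E(T)=\Pi^0(T)\subset\Delta(T)\subset\Delta_a(T)$ via Lemma \ref{lem1}(ii), followed by $E(T)\subset E_a(T)$, is the intended route. The only hedge worth removing is in the last step: $E(T)\subset E_a(T)$ holds unconditionally, since an isolated point of $\sigma(T)$ lies on $\partial\sigma(T)\subset\sigma_a(T)$ and is therefore isolated in the subset $\sigma_a(T)$ as well.
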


In the first assertion of the next theorem, we extend with simple and short proof
  \cite[Theorem 2.15]{aiena2} in which it is proved  that if $T^*$ has the SVEP, then the properties $(UW_{\Pi})$ and $(Z_{\Pi_{a}})$ for $T$ are equivalent. Note that property $(gb)$ is equivalent to say that $T^*$ has the SVEP on $\Delta_{a}^g(T).$ Recall that according to \cite{berkani-zariouh1}, an operator $T\in L(X)$ is said to satisfy property $(gb)$ if $\Delta_{a}^g(T)=\Pi(T).$

\begin{thm} Let $T\in L(X).$ We have the following statements.\\
(i) If $T$ satisfies property $(gb),$ then the properties $(UW_{\Pi})$ and $(Z_{\Pi_{a}})$ for $T$ are equivalent.\\
(ii) If $T$ satisfies the properties $(UW_{\Pi})$ and $(Z_{\Pi_{a}}),$ then  $T$ satisfies property $(gb).$\\
(iii) If $T\in L(X)$ satisfies the property $(Z_{\Pi_a}),$ then $\Pi(T)\subset \Delta_a(T).$
\end{thm}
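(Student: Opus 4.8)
The plan is to dispatch the three assertions in the order (iii), (i), (ii), in increasing order of difficulty. For (iii) I would only chain standard inclusions: since every pole is a left pole, $\Pi(T)\subseteq\Pi_a(T)$; property $(Z_{\Pi_a})$ gives $\Pi_a(T)=\Delta(T)$; and $\Delta(T)\subseteq\Delta_a(T)$ always holds (this inclusion is already invoked in the proof of Theorem~\ref{thm1}). Concatenating yields $\Pi(T)\subseteq\Pi_a(T)=\Delta(T)\subseteq\Delta_a(T)$, which is the claim.

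For (i) I would first reduce the equivalence to two set equalities that $(gb)$ forces, namely $\Pi(T)=\Pi_a(T)$ and $\Delta(T)=\Delta_a(T)$. The first follows from $\Pi(T)\subseteq\Pi_a(T)\subseteq\Delta_a^g(T)=\Pi(T)$, where the middle inclusion uses $\sigma_{ubw}(T)\subseteq\sigma_{ld}(T)$ and the last equality is $(gb)$. For the second I would exploit the remark preceding the theorem, that $(gb)$ is equivalent to $T^*$ having the SVEP on $\Delta_a^g(T)$: since $\sigma_{ubw}(T)\subseteq\sigma_{uw}(T)$ gives $\Delta_a(T)\subseteq\Delta_a^g(T)$, each $\lambda\in\Delta_a(T)$ enjoys the SVEP for $T^*$, and as $T-\lambda$ is then upper semi-Fredholm this forces $q(T-\lambda)<\infty$, hence $\mathrm{ind}(T-\lambda)\ge 0$; combined with $\mathrm{ind}(T-\lambda)\le 0$ coming from $\lambda\notin\sigma_{uw}(T)$ we obtain $\mathrm{ind}(T-\lambda)=0$, i.e.\ $\lambda\in\Delta(T)$. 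Granting both equalities, $(UW_{\Pi})$ reads $\Delta_a(T)=\Pi(T)$ and $(Z_{\Pi_a})$ reads $\Delta(T)=\Pi_a(T)$, and these become literally the same statement.

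For (ii) I would first prove $\Pi(T)=\Pi_a(T)$: if $\lambda\in\Pi_a(T)$, then $(Z_{\Pi_a})$ places $\lambda$ in $\Delta(T)$, so $T-\lambda$ is Weyl and in particular $\alpha(T-\lambda)<\infty$ and $\lambda\notin\sigma_{uw}(T)$; as left poles are isolated in $\sigma_a(T)$, this yields $\lambda\in\Pi_a^0(T)$, which equals $\Pi(T)$ by Lemma~\ref{lem1}(iii). It then remains to show $\Delta_a^g(T)=\Pi(T)$; the inclusion $\Pi(T)\subseteq\Delta_a^g(T)$ is automatic, so I would fix $\lambda\in\Delta_a^g(T)$, where $T-\lambda$ is upper semi-B-Fredholm with $\mathrm{ind}(T-\lambda)\le 0$, and apply the punctured neighborhood theorem for semi-B-Fredholm operators: on a punctured disc about $\lambda$ the operator $T-\mu$ is upper semi-Fredholm with the same index and constant nullity $d$. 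If $d>0$, every such $\mu$ lies in $\sigma_a(T)\setminus\sigma_{uw}(T)=\Delta_a(T)=\Pi(T)$ by $(UW_{\Pi})$, producing a whole punctured disc of poles, which is impossible since poles are isolated in $\sigma(T)$. Hence $d=0$, so $\lambda$ is isolated in $\sigma_a(T)$; this gives the SVEP for $T$ at $\lambda$ and $p(T-\lambda)<\infty$, and together with upper semi-B-Fredholmness it makes $\lambda$ a left pole, i.e.\ $\lambda\in\Pi_a(T)=\Pi(T)$.

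The one genuinely delicate point is this last dichotomy in (ii): $\Delta_a^g(T)$ may a priori contain ``purely B-Fredholm'' points (those in $\sigma_{uw}(T)\setminus\sigma_{ubw}(T)$) that are not upper semi-Weyl, and the entire difficulty is to exclude them. Both hypotheses are essential here, since $(UW_{\Pi})$ is exactly what collapses the $d>0$ case; indeed the operator $R\oplus R\oplus L$ of Proposition~\ref{prop1} satisfies $(Z_{\Pi_a})$ yet fails $(gb)$ precisely because $(UW_{\Pi})$ fails and the case $d>0$ really occurs at $0$. I would therefore expect the punctured neighborhood step, fed by the identification $\Pi(T)=\Pi_a(T)$, to be the crux of the whole theorem, with (i) and (iii) being essentially formal.
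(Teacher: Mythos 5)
Your proof is correct, but it takes a genuinely different route from the paper's. For (iii) the two arguments essentially coincide: you chain $\Pi(T)\subseteq\Pi_a(T)=\Delta(T)\subseteq\Delta_a(T)$, while the paper asserts the slightly stronger $\Pi(T)=\Pi_a(T)=\Delta(T)$ before invoking $\Delta(T)\subseteq\Delta_a(T)$; your version needs one fewer fact. For (i) and (ii), however, the paper's proof is almost entirely citation-based: it reduces both statements to the identities $\Delta_a(T)=\Pi^0(T)=\Pi(T)=\Pi_a(T)$ by appealing to the known characterizations of property $(gb)$ in \cite{berkani-zariouh1} (Corollary 2.9, Theorem 2.3, Theorem 2.10) and of property $(Z_{\Pi_a})$ in \cite{zariouh} (Lemma 2.9), so each part is two or three lines. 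You instead rederive the needed reductions from first principles: in (i) you show that $(gb)$ forces $\Pi(T)=\Pi_a(T)$ and $\Delta(T)=\Delta_a(T)$ using the SVEP of $T^*$ on $\Delta_a^g(T)$ together with the fact that finite descent of a semi-Fredholm operator forces nonnegative index, after which the two properties are verbatim the same equality; in (ii) you prove $\Delta_a^g(T)=\Pi(T)$ directly via the punctured neighbourhood theorem for semi-B-Fredholm operators, killing the positive-nullity case because it would manufacture an uncountable set of poles. All the SVEP equivalences you invoke for semi-B-Fredholm operators are exactly the ones recorded in Section 2 of the paper, and the constancy of the nullity on a punctured disc is justified by the standard decomposition of a semi-B-Fredholm operator into a semi-Fredholm part plus a nilpotent part, so there is no gap. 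What your approach buys is a self-contained argument that makes visible why both hypotheses in (ii) are needed (your example $R\oplus R\oplus L$ showing the $d>0$ case genuinely occurs is a nice touch); what the paper's approach buys is brevity, since the cited results of \cite{berkani-zariouh1} and \cite{zariouh} already encapsulate precisely the perturbation and SVEP work you redo by hand. Your remark that the punctured-neighbourhood dichotomy is ``the crux'' is accurate for a from-scratch proof, though in the paper that content is entirely outsourced to Theorem 2.10 of \cite{berkani-zariouh1}.
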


\begin{proof}
(i) Suppose that $T$ satisfies property $(UW_{\Pi}).$ The property $(gb)$ for $T$ entails  from \cite[Corollary 2.9]{berkani-zariouh1} and Lemma \ref{lem1} that $\Delta(T)=\Pi^0(T)=\Pi(T)=\Pi_a(T).$ So $T$ satisfies property $(Z_{\Pi_{a}}).$ Conversely, if $T$ satisfies property $(Z_{\Pi_{a}}),$ then from \cite[Theorem 2.3]{berkani-zariouh1} and \cite[Lemma 2.9]{zariouh} that $\Delta_{a}(T)=\Pi^0(T)=\Pi(T).$\\
(ii) The properties  $(UW_{\Pi})$ and $(Z_{\Pi_{a}})$ imply from \cite[Lemma 2.9]{zariouh}, that $\Delta_{a}(T)=\Pi^0(T)$ and  $\Pi(T)=\Pi_a(T).$ This is equivalent from \cite[Theorem 2.10]{berkani-zariouh1} to say that $T$ satisfies property $(gb).$\\
 (iii) Since $T$ satisfies the property $(Z_{\Pi_a}),$ then $\Pi(T)=\Pi_a(T)=\Delta(T).$ As $\Delta(T)\subset\Delta_a(T),$ it follows that  $\Pi(T)\subset \Delta_a(T).$\end{proof}

\begin{rema}
%\begin{enumerate}~~
\item[1.] Remark that all of the properties $(gb),$ $(UW_{\Pi})$ and $(Z_{\Pi_{a}})$ are mutually independent.
\begin{enumerate}
 \item[i.] We consider the operator $T$ defined on $l^2$ by $T(x_1,x_2,x_3,\ldots)=(x_2,0,0,\ldots).$ Then $\sigma(T)=\sigma_{a}(T)=\sigma_{w}(T)=\sigma_{uw}(T)=\Pi_{a}(T)=\Pi(T)=\{0\}$ and $\sigma_{ubw}(T)=\emptyset.$ So $T$ satisfies property $(gb),$ but it does not satisfy neither  property $(UW_{\Pi})$ nor property  $(Z_{\Pi_{a}}).$
\item[ii.] The operator $T=R\oplus R\oplus L$ defined in Proposition \ref{prop1} satisfies property $(Z_{\Pi_{a}}),$  but it  does not satisfy property $(gb),$ since $0\in \Delta_{a}^g(T).$
\item[iii.] The operator $T=R\oplus 0$ defined above in Remark \ref{rema1} satisfies property $(UW_{\Pi}),$   but it  does not satisfy property $(gb).$
\end{enumerate}

\item[2.] Remark that all of the properties $(gb),$ $(UW_{E}),$ $(UW_{E_{a}})$ and  $(Z_{\Pi_{a}})$ are mutually independent.
\begin{enumerate}
     \item[i.] We consider the operator $T$ defined on $l^2$ by $T(x_1,x_2,x_3,\ldots)=(x_2,0,0,\ldots).$ Then $T$ satisfies property $(gb),$ but it does not satisfy neither  property $(UW_{E})$ nor property  $(UW_{E_{a}}).$
\item[ii.] The operator $T=R\oplus 0$ defined above satisfies property $(UW_{E}),$  but it  does not satisfy neither property $(gb)$ nor property  $(Z_{\Pi_{a}}).$
\item[iii.] The operator $T=R\oplus R\oplus L$ defined above  satisfies property  $(Z_{\Pi_{a}}),$   but it  does not satisfy neither property $(UW_{E})$ nor property $(UW_{E_{a}}).$  And the operator $T=R\oplus P$ defined in Remark \ref{rema1}, satisfies property  $(UW_{E_{a}}),$ but it  does not satisfy neither property $(gb)$ nor property  $(Z_{\Pi_{a}}).$
\end{enumerate}
%\end{enumerate}
\end{rema}

 \noindent {\bf \underline{Remark II}:} P. Aiena and M. Kachad announced and proved in \cite[Theorem 2.6]{aiena1} the following result.

  \noindent Theorem: \cite[Theorem 2.6]{aiena1} Let $T \in L(X).$ Then the following statements are equivalent:\\
 (i) $T$  satisfies property $(UW_{\Pi});$\\
 (ii) $T$  satisfies property $(Z_{\Pi_a})$ and $\sigma_{w}(T)\setminus\sigma_{uw}(T) =\sigma(T)\setminus\sigma_{a}(T);$\\
 (iii) $T$  satisfies property $(Z_{\Pi_a})$ and $\Delta_{a}(T)\cap\sigma_{w}(T)=\emptyset.$\\

But this theorem is not true. Indeed, by Proposition \ref{prop1}, it follows that the implications $``(i)\Longrightarrow (ii)"$ and  $``(i)\Longrightarrow (iii)"$ are false. Note that the reasoning they used  for proving \cite[Theorem 2.6]{aiena1} is as follows: they proved that $``(i)\Longleftrightarrow (ii)"$ and $``(i)\Longleftrightarrow (iii)"$ and  they deduced by transitivity that $``(ii)\Longleftrightarrow (iii)".$ But, since   the properties $(UW_{\Pi})$ and $(Z_{\Pi_{a}})$ are independent (see Proposition \ref{prop1}), then  the proof of  $``(ii)\Longleftrightarrow (iii)"$ remains incorrect.

Moreover, in line   12 and 13 and 14 of the proof of  \cite[Theorem 2.6]{aiena1},   they said
$$`` \text{If } \lambda\in \Delta_{a}(T) \text{ then }\lambda \notin \sigma_{w}(T) \text{ and hence }\lambda \notin \sigma_{uw}(T). \text{  This implies that  } \lambda\in \Delta_{a}(T)=\Pi_{a}(T)=\Pi(T)"$$
But this is not clear.

Now, we give in Theorem \ref{thm2} bellow, the correct version of \cite[Theorem 2.6]{aiena1} in which we also  prove that the   equivalence $``(ii)\Longleftrightarrow (iii)"$ of \cite[Theorem 2.6]{aiena1}  is  true.  Before that, we give the following lemma.

\begin{lem}\label{lem2}
The following equivalence holds for every $T\in L(X).$\\
$$\Delta_{a}(T)\cap\sigma_{w}(T)=\emptyset \Longleftrightarrow \sigma_{w}(T)\backslash\sigma_{uw}(T)=\sigma(T)\backslash\sigma_{a}(T).$$
\end{lem}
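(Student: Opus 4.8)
The plan is to show that each of the two sides of the stated equivalence is, on its own, equivalent to the single set-theoretic identity $\sigma_{a}(T)\cap\sigma_{w}(T)=\sigma_{uw}(T)$; the desired equivalence then follows by transitivity. Throughout I would use only the standard inclusions coming from Fredholm theory, namely $\sigma_{uw}(T)\subseteq\sigma_{a}(T)\subseteq\sigma(T)$ and $\sigma_{uw}(T)\subseteq\sigma_{w}(T)\subseteq\sigma(T)$. These are immediate: if $T-\lambda I$ is bounded below then it is upper semi-Fredholm with $\alpha(T-\lambda I)=0$, hence of index $\le 0$, so upper semi-Weyl, which gives $\sigma_{uw}(T)\subseteq\sigma_{a}(T)$; and since a Weyl operator is in particular upper semi-Weyl we get $\sigma_{uw}(T)\subseteq\sigma_{w}(T)$.

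First I would dispose of the left-hand side, which is pure set algebra. Writing $\Delta_{a}(T)=\sigma_{a}(T)\setminus\sigma_{uw}(T)$, the condition $\Delta_{a}(T)\cap\sigma_{w}(T)=\emptyset$ says exactly that $\sigma_{a}(T)\cap\sigma_{w}(T)\cap(\sigma_{uw}(T))^{C}=\emptyset$, i.e. $\sigma_{a}(T)\cap\sigma_{w}(T)\subseteq\sigma_{uw}(T)$. Since the inclusions above yield $\sigma_{uw}(T)\subseteq\sigma_{a}(T)\cap\sigma_{w}(T)$ for free, this is equivalent to the equality $\sigma_{a}(T)\cap\sigma_{w}(T)=\sigma_{uw}(T)$.

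The step I expect to carry the real content is the right-hand side, and in particular the inclusion $\sigma(T)\setminus\sigma_{a}(T)\subseteq\sigma_{w}(T)\setminus\sigma_{uw}(T)$, which I claim holds for \emph{every} $T\in L(X)$. To prove it, take $\lambda\in\sigma(T)\setminus\sigma_{a}(T)$: then $T-\lambda I$ is bounded below but not invertible, so $\alpha(T-\lambda I)=0$ while $\beta(T-\lambda I)>0$. If $\beta(T-\lambda I)<\infty$ then $T-\lambda I$ is Fredholm of index $-\beta(T-\lambda I)<0$, hence not Weyl; if $\beta(T-\lambda I)=\infty$ then $T-\lambda I$ is not Fredholm, hence again not Weyl. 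In both cases $\lambda\in\sigma_{w}(T)$, while $\lambda\notin\sigma_{a}(T)\supseteq\sigma_{uw}(T)$ forces $\lambda\notin\sigma_{uw}(T)$; thus $\lambda\in\sigma_{w}(T)\setminus\sigma_{uw}(T)$. This is the main obstacle, and indeed the only place where an index computation rather than purely formal manipulation of the four spectra is needed.

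Granting this inclusion, the right-hand equality $\sigma_{w}(T)\setminus\sigma_{uw}(T)=\sigma(T)\setminus\sigma_{a}(T)$ reduces to the reverse inclusion $\sigma_{w}(T)\setminus\sigma_{uw}(T)\subseteq\sigma(T)\setminus\sigma_{a}(T)$. Because $\sigma_{w}(T)\subseteq\sigma(T)$, this reverse inclusion is equivalent to $\sigma_{w}(T)\cap(\sigma_{uw}(T))^{C}\subseteq(\sigma_{a}(T))^{C}$, i.e. to $\sigma_{w}(T)\cap\sigma_{a}(T)\subseteq\sigma_{uw}(T)$, which — exactly as for the left-hand side — is equivalent to $\sigma_{a}(T)\cap\sigma_{w}(T)=\sigma_{uw}(T)$. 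Hence both sides of the stated equivalence are equivalent to the common condition $\sigma_{a}(T)\cap\sigma_{w}(T)=\sigma_{uw}(T)$, and the lemma follows.
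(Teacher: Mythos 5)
Your proof is correct and rests on the same two ingredients as the paper's: the unconditional inclusion $\sigma(T)\setminus\sigma_{a}(T)\subseteq\sigma_{w}(T)\setminus\sigma_{uw}(T)$ (which the paper merely calls ``easily seen'' and you justify with the index computation on $\alpha(T-\lambda I)=0$, $\beta(T-\lambda I)>0$) together with elementary set algebra using $\sigma_{uw}(T)\subseteq\sigma_{a}(T)$ and $\sigma_{uw}(T)\subseteq\sigma_{w}(T)$. The only organizational difference is that you route both sides of the equivalence through the common condition $\sigma_{a}(T)\cap\sigma_{w}(T)=\sigma_{uw}(T)$, whereas the paper proves the two implications directly, using a disjoint-union decomposition of $\sigma_{w}(T)$ for the backward direction.
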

\begin{proof}
$\Longrightarrow$) Suppose that $\Delta_{a}(T)\cap\sigma_{w}(T)=\emptyset$, then $(\sigma_{w}(T)\backslash\sigma_{uw}(T))\cap\sigma_{a}(T)=\emptyset.$ If $\lambda \in \sigma_{w}(T)\backslash\sigma_{uw}(T),$ then $\lambda \notin  \sigma_{a}(T)$, so that  $\lambda \in \sigma(T)\backslash\sigma_{a}(T).$ Thus $\sigma_{w}(T)\setminus\sigma_{uw}(T)\subset\sigma(T)\setminus\sigma_{a}(T).$ Conversely, it easily seen that without condition on $T,$  we have always that  $\sigma(T)\setminus\sigma_{a}(T)=\sigma_{w}(T)\cap (\sigma_{a}(T))^C,$ and so    $\sigma(T)\setminus\sigma_{a}(T)\subset \sigma_{w}(T)\setminus\sigma_{uw}(T)$. Hence,
  $\sigma_{w}(T)\setminus\sigma_{uw}(T)=\sigma(T)\setminus\sigma_{a}(T)$.\\
$\Longleftarrow$) Assume that   $\sigma_{w}(T)\setminus\sigma_{uw}(T)=\sigma(T)\backslash\sigma_{a}(T)$, then $\sigma_{w}(T)= \sigma_{uw}(T) \bigsqcup (\sigma(T)\setminus\sigma_{a}(T))$. Hence
 \begin{eqnarray*}
\Delta_{a}(T)\cap\sigma_{w}(T)&=& \Delta_{a}(T)\cap[\sigma_{uw}(T) \bigsqcup (\sigma(T)\setminus\sigma_{a}(T))]\\ &=& [\Delta_a(T)\cap\sigma_{uw}(T)]\bigsqcup[\Delta_{a}(T)\cap (\sigma(T)\setminus\sigma_{a}(T))]\\ &=& [\Delta_{a}(T)\cap (\sigma(T)\setminus\sigma_{a}(T))]\subset\sigma_{a}(T)\cap (\sigma(T)\setminus\sigma_{a}(T))=\emptyset. \end{eqnarray*}
\end{proof}

\begin{thm}\label{thm2}Let $T \in L(X).$ Then the following statements are equivalent:\\
 (i) $T$  satisfies property $(UW_{\Pi})$ and $\Pi_{a}(T)\cap\sigma_{uw}(T)=\emptyset;$\\
 (ii) $T$  satisfies property $(Z_{\Pi_a})$ and $\sigma_{w}(T)\setminus\sigma_{uw}(T) =\sigma(T)\setminus\sigma_{a}(T);$\\
 (iii) $T$  satisfies property $(Z_{\Pi_a})$ and $\Delta_{a}(T)\cap\sigma_{w}(T)=\emptyset.$
\end{thm}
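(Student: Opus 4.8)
The plan is to exploit Lemma \ref{lem2}, which already delivers the equivalence $(ii)\Longleftrightarrow(iii)$ for free: both statements contain property $(Z_{\Pi_a})$, and the two remaining conditions $\sigma_{w}(T)\setminus\sigma_{uw}(T)=\sigma(T)\setminus\sigma_{a}(T)$ and $\Delta_{a}(T)\cap\sigma_{w}(T)=\emptyset$ are equivalent by that lemma. Hence the whole theorem reduces to establishing $(i)\Longleftrightarrow(iii)$, and I would organise the proof as these two implications.

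For $(i)\Longrightarrow(iii)$ I would argue as follows. Assuming property $(UW_{\Pi})$ together with $\Pi_{a}(T)\cap\sigma_{uw}(T)=\emptyset$, Theorem \ref{thm1} immediately yields property $(Z_{\Pi_a})$, so it remains only to check $\Delta_{a}(T)\cap\sigma_{w}(T)=\emptyset$. By Lemma \ref{lem1}(iii), property $(UW_{\Pi})$ forces $\Pi(T)=\Pi^{0}(T)$, and since the defining equality of $(UW_{\Pi})$ reads $\Delta_{a}(T)=\Pi(T)$, we obtain $\Delta_{a}(T)=\Pi^{0}(T)$. As the inclusion $\Pi^{0}(T)\subset\Delta(T)=\sigma(T)\setminus\sigma_{w}(T)$ always holds, this gives $\Delta_{a}(T)\subset(\sigma_{w}(T))^{C}$, that is, $\Delta_{a}(T)\cap\sigma_{w}(T)=\emptyset$, which is (iii).

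For the converse $(iii)\Longrightarrow(i)$, assume property $(Z_{\Pi_a})$ and $\Delta_{a}(T)\cap\sigma_{w}(T)=\emptyset$. First, $(Z_{\Pi_a})$ gives $\Pi_{a}(T)=\Delta(T)=\sigma(T)\setminus\sigma_{w}(T)$, so $\Pi_{a}(T)\cap\sigma_{w}(T)=\emptyset$, and since $\sigma_{uw}(T)\subset\sigma_{w}(T)$ also $\Pi_{a}(T)\cap\sigma_{uw}(T)=\emptyset$; this is already the second requirement in (i). To obtain property $(UW_{\Pi})$ I would show $\Delta_{a}(T)=\Delta(T)$ by two inclusions: the hypothesis $\Delta_{a}(T)\cap\sigma_{w}(T)=\emptyset$ forces $\Delta_{a}(T)\subset\sigma(T)\cap(\sigma_{w}(T))^{C}=\Delta(T)$, while for $\lambda\in\Delta(T)=\Pi_{a}(T)\subset\sigma_{a}(T)$ the operator $T-\lambda I$ is Weyl, hence upper semi-Weyl, so $\lambda\in\sigma_{a}(T)\setminus\sigma_{uw}(T)=\Delta_{a}(T)$. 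Combined with $(Z_{\Pi_a})$ this yields $\Delta_{a}(T)=\Delta(T)=\Pi_{a}(T)$, and it then remains only to upgrade $\Pi_{a}(T)$ to $\Pi(T)$.

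The heart of the matter, and the step I expect to be the main obstacle, is the equality $\Pi_{a}(T)=\Pi(T)$ under hypothesis (iii). The inclusion $\Pi(T)\subset\Pi_{a}(T)$ is automatic, so the work is in the reverse one. Take $\lambda\in\Pi_{a}(T)=\Delta(T)$: then $p(T-\lambda I)<\infty$ because $\lambda$ is a left pole, and $T-\lambda I$ is Weyl, i.e. Fredholm of index $0$. Since $\mathrm{ind}\,(T-\lambda I)^{n}=n\,\mathrm{ind}\,(T-\lambda I)=0$ for every $n$, we have $\alpha((T-\lambda I)^{n})=\beta((T-\lambda I)^{n})$ for all $n$; the finiteness of the ascent makes the left-hand side stabilise, so the codimensions $\beta((T-\lambda I)^{n})$ stabilise too, and equality of the finite codimensions of the nested closed ranges $\mathcal{R}((T-\lambda I)^{n+1})\subset\mathcal{R}((T-\lambda I)^{n})$ forces $q(T-\lambda I)<\infty$. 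Thus $T-\lambda I$ has finite ascent and descent, so $\lambda$ is a pole, $\lambda\in\Pi(T)$, giving $\Pi_{a}(T)=\Pi(T)$. Consequently $\Delta_{a}(T)=\Pi(T)$, which is precisely property $(UW_{\Pi})$, and this completes $(iii)\Longrightarrow(i)$ and hence, via Lemma \ref{lem2}, the whole equivalence.
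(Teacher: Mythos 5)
Your proof is correct, and its overall skeleton coincides with the paper's: both dispose of $(ii)\Leftrightarrow(iii)$ at once via Lemma \ref{lem2} and then connect (i) with the other two statements using Theorem \ref{thm1}. The differences are in how the remaining two implications are closed. For the forward direction you prove $(i)\Rightarrow(iii)$ directly from Lemma \ref{lem1}(iii) and the inclusion $\Pi^{0}(T)\subset\Delta(T)$, whereas the paper proves $(i)\Rightarrow(ii)$ by invoking Theorem \ref{thm1} together with an external result (\cite[Theorem 3.5]{berkani2}); your route stays inside the paper. For $(iii)\Rightarrow(i)$ the paper extracts both $\Pi_{a}^{0}(T)=\Pi_{a}(T)$ and, implicitly, $\Pi_{a}(T)=\Pi(T)$ from \cite[Lemma 2.9]{zariouh}, while you derive $\Pi_{a}(T)\cap\sigma_{uw}(T)=\emptyset$ in one line from the definition of $(Z_{\Pi_a})$ and then supply a self-contained proof that every left pole lying in $\Delta(T)$ is a pole, via the standard index argument (finite ascent plus index zero forces finite descent through stabilization of $\alpha((T-\lambda I)^{n})=\beta((T-\lambda I)^{n})$ and the nesting of the finite-codimensional closed ranges). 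What your version buys is independence from the two cited references at the cost of a longer argument; what the paper's version buys is brevity, at the cost of leaving the key identification $\Pi_{a}(T)=\Pi(T)$ to the reader's trust in the citation. Both are sound.
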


\begin{proof} $(ii)\Longleftrightarrow(iii)$ Is an immediate consequence of   Lemma \ref{lem2}.\\
$(i)\Longrightarrow(ii)$  Follows directly from Theorem \ref{thm1} and \cite[Theorem 3.5]{berkani2}.\\
$(iii)\Longrightarrow(i)$ Since $T$ satisfies property $(Z_{\Pi_a})$  then $\Pi_{a}^0(T)=\Pi_{a}(T),$ see \cite[Lemma 2.9]{zariouh}. As $\Pi_{a}^0(T)\cap\sigma_{uw}(T)\subset\Delta_{a}(T)\cap\sigma_{w}(T),$ then $\Pi_{a}(T)\cap\sigma_{uw}(T)=\emptyset.$ Let us to prove that property  $(UW_{\Pi})$ holds for $T.$ Since $\Delta_{a}(T)\cap\sigma_{w}(T)=\emptyset,$  then $\Delta_{a}(T)=\Delta_{a}(T)\cap (\sigma_{w}(T))^C=\Delta(T).$ As $T$ satisfies property $(Z_{\Pi_a}),$ then $\Delta_{a}(T)=\Pi(T).$
\end{proof}

\begin{rema}

 If $T\in L(X)$  satisfies  property $(Z_{\Pi_a})$ and the a-Browder's theorem, then $T$ satisfies the property $(UW_{\Pi})$. Indeed,
 since $T$  satisfies the property $(Z_{\Pi_a}),$  then $\Pi_{a}^0(T)=\Pi(T).$   a-Browder's theorem for $T$ entails that $\Pi_a^0(T)=\Delta_a(T).$ Hence  $\Delta_a(T)=\Pi(T).$ So $T$ satisfies the property $(UW_{\Pi}).$ Remark that if $T$   satisfies property $(Z_{\Pi_a})$ and  $\Delta_{a}(T)\cap\sigma_{w}(T)=\emptyset$ then $T$ satisfies a-Browder's theorem.\end{rema}

Now, we give a similar result to Theorem \ref{thm2} for the property $(Z_{E_a}).$

\begin{thm}Let $T \in L(X).$ Then the following statements are equivalent:\\
 (i) $T$  satisfies property $(UW_{E})$ and $E_{a}(T)\cap\sigma_{uw}(T)=\emptyset;$\\
 (ii) $T$  satisfies property $(Z_{E_a})$ and $\sigma_{w}(T)\setminus\sigma_{uw}(T) =\sigma(T)\setminus\sigma_{a}(T);$\\
 (iii) $T$  satisfies property $(Z_{E_a})$ and $\Delta_{a}(T)\cap\sigma_{w}(T)=\emptyset.$
\end{thm}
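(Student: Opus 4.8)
The plan is to mirror the proof of Theorem \ref{thm2}, replacing throughout $\Pi(T)$, $\Pi_a(T)$, $(UW_{\Pi})$ and $(Z_{\Pi_a})$ by $E(T)$, $E_a(T)$, $(UW_E)$ and $(Z_{E_a})$. The equivalence $(ii)\Longleftrightarrow(iii)$ is immediate and costs nothing new: Lemma \ref{lem2} is a purely spectral statement about $\sigma(T)$, $\sigma_a(T)$, $\sigma_w(T)$, $\sigma_{uw}(T)$ and makes no reference to any Weyl-type property, so it applies verbatim. Thus the real work reduces to proving $(i)\Longrightarrow(ii)$ and $(iii)\Longrightarrow(i)$.

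For $(i)\Longrightarrow(ii)$ I would first obtain property $(Z_{E_a})$ for free from Theorem \ref{thm0}, whose hypotheses are exactly $(UW_E)$ and $E_a(T)\cap\sigma_{uw}(T)=\emptyset$. For the spectral identity I would argue that $(UW_E)$ alone already forces $\Delta_a(T)\cap\sigma_w(T)=\emptyset$: by Lemma \ref{lem1}(ii), $(UW_E)$ gives $E(T)=\Pi(T)$, and since $(UW_E)$ means $\Delta_a(T)=E(T)$, we get $\Delta_a(T)=\Pi(T)$; every pole is a Browder point, hence lies outside $\sigma_w(T)$, so $\Delta_a(T)\cap\sigma_w(T)=\emptyset$. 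Lemma \ref{lem2} then converts this into $\sigma_w(T)\setminus\sigma_{uw}(T)=\sigma(T)\setminus\sigma_a(T)$, which is precisely the identity in (ii).

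For $(iii)\Longrightarrow(i)$ the strategy is to extract property $(UW_{E_a})$ as an intermediate step and then quote Lemma \ref{lem1}(i). Since $\Delta_a(T)\cap\sigma_w(T)=\emptyset$ and $\Delta(T)\subseteq\Delta_a(T)$ always, we get $\Delta_a(T)=\Delta(T)$; combined with $(Z_{E_a})$, i.e. $\Delta(T)=E_a(T)$, this yields $\Delta_a(T)=E_a(T)$, so $T$ satisfies $(UW_{E_a})$. Lemma \ref{lem1}(i) then delivers $E_a(T)=\Pi_a(T)$ and $E(T)=\Pi(T)$. The condition $E_a(T)\cap\sigma_{uw}(T)=\emptyset$ demanded by (i) is then clear, because $E_a(T)=\Delta(T)\subseteq(\sigma_w(T))^C\subseteq(\sigma_{uw}(T))^C$. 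It remains only to upgrade $\Delta_a(T)=E_a(T)$ to $\Delta_a(T)=E(T)$, i.e. to prove $E_a(T)=E(T)$, which via the two identities just obtained is the same as $\Pi_a(T)=\Pi(T)$.

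The main obstacle is exactly this last equality, and it is the same technical point that drives $(iii)\Longrightarrow(i)$ in Theorem \ref{thm2}. One inclusion, $\Pi(T)\subseteq\Pi_a(T)$, is automatic. For the reverse, take $\lambda\in\Pi_a(T)=\Delta_a(T)=\Delta(T)$: then $T-\lambda I$ is a left pole, so $p(T-\lambda I)<\infty$, while $\lambda\notin\sigma_w(T)$ makes $T-\lambda I$ Fredholm of index $0$. The Fredholm fact that finite ascent together with index $0$ forces finite descent then shows that $\lambda$ is a pole, i.e. $\lambda\in\Pi(T)$. Hence $\Pi_a(T)=\Pi(T)$, and with Lemma \ref{lem1}(i) this gives $\Delta_a(T)=\Pi(T)=E(T)$, which is property $(UW_E)$, completing $(iii)\Longrightarrow(i)$. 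I would keep this step short by appealing to the corresponding argument already used in Theorem \ref{thm2} rather than re-deriving the ascent/descent lemma.
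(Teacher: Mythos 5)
Your proof is correct, but the route you take for $(iii)\Longrightarrow(i)$ is genuinely different from the paper's. The paper disposes of $(iii)\Longrightarrow(i)$ by quoting external facts from \cite{zariouh} --- that property $(Z_{E_a})$ implies property $(Z_{\Pi_a})$ together with $E_a(T)=\Pi_a(T)$ and $E(T)=\Pi(T)$ --- and then feeding these into Theorem \ref{thm2} and Corollary \ref{cor1}. You instead first extract property $(UW_{E_a})$ from $\Delta_a(T)=\Delta(T)=E_a(T)$, invoke Lemma \ref{lem1}(i), and close the remaining gap $\Pi_a(T)=\Pi(T)$ by hand, using the standard Fredholm fact that finite ascent plus index zero forces finite descent; this keeps the argument self-contained within the paper's own lemmas, at the price of redoing work that the citations to \cite{zariouh} absorb. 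Your $(i)\Longrightarrow(ii)$ also differs slightly: the paper derives it from Corollary \ref{cor1} and Theorem \ref{thm0}, whereas you obtain $\Delta_a(T)\cap\sigma_{w}(T)=\emptyset$ directly from Lemma \ref{lem1}(ii) and then convert it with Lemma \ref{lem2}; that is clean, but note one phrasing slip: a pole need not be a Browder point in general (it need not be Fredholm), so you should say that Lemma \ref{lem1}(ii) gives $\Delta_a(T)=\Pi^{0}(T)$, the set of \emph{finite-rank} poles, and these do lie outside $\sigma_{w}(T)$. With that one-word repair the argument is complete.
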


\begin{proof} $(ii)\Longleftrightarrow(iii)$ Is already done above.\\
$(i)\Longrightarrow(ii)$  Follows directly from Corollary \ref{cor1} and Theorem \ref{thm0}.\\
$(iii)\Longrightarrow(i)$ Since $T$ satisfies property $(Z_{E_a})$  then $T$ satisfies property $(Z_{\Pi_a})$ and  $E_{a}(T)=\Pi_a(T),$ see \cite[Corollary 2.5]{zariouh}. It follows from Theorem \ref{thm2}, that $T$  satisfies property $(UW_{\Pi})$ and $E_{a}(T)\cap\sigma_{uw}(T)=\emptyset,$ As $E(T)=\Pi(T)$ see \cite[Lemma 2.3]{zariouh}. Then by Corollary \ref{cor1}, $T$  satisfies property $(UW_{E})$ and $E_{a}(T)\cap\sigma_{uw}(T)=\emptyset;$
\end{proof}

 \noindent {\bf \underline{Remark III}:} It is proved in  \cite[page 37]{aiena1}   that if $T\in L(X)$ is a finite-isoloid operator,  then $\sigma_{b}(T) = \sigma_{d}(T).$ Its proof is based on the fact that  \\
  $$\mbox{ if } \lambda \notin \sigma_{d}(T), \mbox{ then } \lambda \in \mbox {iso}\,\sigma(T).\,\,\,\,\,\, (I_3)$$ But  this is not  true as we can see in the  following example: we consider the right shift operator $R$ defined on $l^2.$ We  have
  $R$ is finite-isoloid  and $  \sigma_{d}(R)=D(0, 1);$ but $ \mbox{iso}\,\sigma(R)=\emptyset.$ Note that the mistake $(I_3)$  is originated in \cite{aiena2}.

  For $T\in L(X),$ we denote by $E_{f}(T)=\{\lambda \in \mbox{iso}\,\sigma(T): \alpha(T-\lambda I)<\infty\},$ and we say that $T$ is finitely if $\mbox{iso}\,\sigma(T)=E_{f}(T).$

 Now we give in the next proposition,  the  same version followed by a correct proof and under the weaker hypothesis that $T$ is finitely. We observe that if $T$ is finite-isoloid, then $T$ is finitely. Observe also that $\sigma_{b}(T)=\sigma_{d}(T)\Longleftrightarrow \Pi(T)=\Pi^{0}(T).$
 \begin{prop}
 If  $T\in L(X)$ is  a finitely operator,  then $\sigma_{b}(T) = \sigma_{d}(T).$
 \end{prop}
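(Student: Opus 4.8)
The plan is to reduce the statement, via the equivalence recorded just before it, namely $\sigma_{b}(T)=\sigma_{d}(T)\Longleftrightarrow \Pi(T)=\Pi^{0}(T)$, to proving the set equality $\Pi(T)=\Pi^{0}(T)$. Since every pole of finite rank is a pole, the inclusion $\Pi^{0}(T)\subset\Pi(T)$ holds with no hypothesis on $T$, so the entire content is the reverse inclusion $\Pi(T)\subset\Pi^{0}(T)$, which I would establish using the assumption that $T$ is finitely.

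First I would fix $\lambda\in\Pi(T)$ and abbreviate $S:=T-\lambda I$. That $\lambda$ is a pole means precisely that $\lambda\in\mbox{iso}\,\sigma(T)$ and that $S$ has finite ascent and descent, say $p:=p(S)=q(S)<\infty$. Invoking the finitely hypothesis $\mbox{iso}\,\sigma(T)=E_{f}(T)$ together with $\lambda\in\mbox{iso}\,\sigma(T)$, I obtain $\lambda\in E_{f}(T)$, that is $\alpha(S)=\dim\NN(S)<\infty$.

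The decisive step is to promote ``finite ascent plus finite nullity'' to ``pole of finite rank''. I would bound $\dim\NN(S^{p})$ along the ascent chain: the map induced by $S$ from $\NN(S^{k+1})/\NN(S^{k})$ into $\NN(S^{k})/\NN(S^{k-1})$ is injective, so the successive gap dimensions are non-increasing, whence each is at most the first gap $\dim\NN(S)=\alpha(S)$; summing the $p$ gaps yields $\dim\NN(S^{p})\le p\,\alpha(S)<\infty$. Because $\lambda$ is a pole of order $p$, one has the decomposition $X=\NN(S^{p})\oplus\R(S^{p})$, on which $S$ acts as a nilpotent operator on the finite-dimensional space $\NN(S^{p})$ and as a bijection on $\R(S^{p})$. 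Consequently $S$ is Fredholm with finite ascent and descent, i.e. Browder, equivalently the Riesz projection at $\lambda$ has finite rank, so $\lambda\in\Pi^{0}(T)$.

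I expect the only step requiring genuine care to be the finite-dimensionality estimate $\dim\NN(S^{p})\le p\,\alpha(S)$; the rest is the standard pole/Browder dictionary already implicit in the paper. Having shown $\Pi(T)\subset\Pi^{0}(T)$, the two sets coincide, and the recorded equivalence gives $\sigma_{b}(T)=\sigma_{d}(T)$, which completes the proof.
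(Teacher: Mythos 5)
Your proof is correct and follows essentially the same route as the paper: both reduce to showing $\Pi(T)=\Pi^{0}(T)$ and use the finitely hypothesis to get $\alpha(T-\lambda I)<\infty$ at each pole. The only difference is that you explicitly justify the step ``pole with finite nullity is a pole of finite rank'' (via the gap-dimension bound $\dim\NN(S^{p})\le p\,\alpha(S)$ and the decomposition $X=\NN(S^{p})\oplus\R(S^{p})$), which the paper simply takes for granted in asserting $\Pi(T)\cap E_{f}(T)\subset\Pi^{0}(T)$.
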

 \begin{proof}
 The inclusion $\sigma_{d}(T) \subset \sigma_{b}(T)$ is always true.   Let $\lambda \not\in \sigma_{d}(T)$ be arbitrary.  If $\lambda \not\in \sigma(T)$ then $\lambda \not\in \sigma_{b}(T).$ If $\lambda \in \sigma(T),$ then $\lambda \in \Pi(T)$ and so $\lambda \in \mbox{iso}\,\sigma(T).$ As $T$  is finitely  then $\lambda\in \Pi^{0}(T)$ and $\lambda \notin \sigma_{b}(T).$ Hence $\sigma_{b}(T) = \sigma_{d}(T).$

 For giving the  reader a good overview of the subject, we give another very  simple proof. Since $T$ is   finitely, then  $\Pi^{0}(T)=\Pi(T)\cap E_{f}(T)=\Pi(T).$
 \end{proof}

 \noindent {\bf \underline{Remark IV}:} In the paper \cite{jayanthi}, the authors defined a property  named $(Bv)$ as follows: $T\in L(X)$ satisfies property $(Bv)$ if $\sigma(T)\setminus\sigma_{uw}(T)=\sigma(T)\setminus\sigma_{ub}(T).$
   Moreover, they  studied this property in connection with different known Weyl type theorems and properties. But it is trivially that this property, is equivalent to the classical a-Browder's theorem; and all of the results announced in this paper are  already done.

\goodbreak
{\small \noindent Zakariae Aznay,\\  Laboratory (L.A.N.O), Department of Mathematics,\\Faculty of Science, Mohammed I University,\\  Oujda 60000 Morocco.\\
aznayweyl@gmail.com\\

\noindent Hassan  Zariouh,\newline Department of
Mathematics (CRMEFO),\newline
 \noindent and laboratory (L.A.N.O), Faculty of Science,\newline
  Mohammed I University, Oujda 60000 Morocco.\\
 \noindent h.zariouh@yahoo.fr

\end{document}